\newtheorem{theorem}{Theorem}[section]
\newtheorem*{theorem*}{Theorem}
\newtheorem*{lemma*}{Lemma}
\newtheorem*{proposition*}{Proposition}
\newtheorem{corollary}[theorem]{Corollary}
\newtheorem*{corollary*}{Corollary}
\theoremstyle{definition}
\newtheorem{definition}[theorem]{Definition}
\newtheorem*{definition*}{Definition}
\newtheorem{example}[theorem]{Example}
\newtheorem*{example*}{Example}
\newtheorem*{ques*}{Question}
\newtheorem*{claim*}{Claim}
\newtheorem*{remark*}{Remark}
\newcommand{\ii}{\item}
\newcommand{\surjto}{\twoheadrightarrow}
\newcommand{\defeq}{:=}
\newcommand{\RR}{\mathbb R}
\newcommand{\ZZ}{\mathbb Z}
\newcommand{\ZN}{\mathbb{Z}_{\ge 0}}
\newcommand{\Zc}[1]{\ZZ/#1}
\newcommand{\eps}{\varepsilon}
\newcommand{\pb}{p_{\mathrm{bas}}}
\title{Linear polychromatic colorings of hypercube faces}
\date{\today}
\author{Evan Chen}
\address{Department of Mathematics, Massachusetts Institute of Technology}
\email{evanchen@mit.edu}
\subjclass[2010]{05C15, 05C35}
\keywords{polychromatic, coloring, hypercube}
\begin{document}

\begin{abstract}
	A coloring of the $\ell$-dimensional faces of $Q_n$
	is called $d$-polychromatic if every embedded $Q_d$
	has every color on at least one face.
	Denote by $p^\ell(d)$ the maximum number of colors such that
	any $Q_n$ can be colored in this way.
	We provide a new lower bound on $p^\ell(d)$ for $\ell > 1$.
\end{abstract}

\maketitle

\section{Introduction}
Denote by $Q_n$ the $n$-dimensional hypercube on $2^n$ vertices.

\begin{definition}
	For $\ell \ge 0$, a \emph{$Q_\ell$-coloring} of $Q_n$ is a coloring of each of the
	the $\ell$-dimensional faces of $Q_n$ with one of $r \ge 1$ colors.
	For $d \ge \ell$, such a coloring is called
	\emph{$d$-polychromatic} if every embedded $Q_d$ contains all $r$ colors.

	For $d \ge \ell \ge 1$, we denote by $p^\ell(d)$ the maximum $r$
	for which a $d$-polychromatic $Q_\ell$-coloring is possible
	on every hypercube $Q_n$, for all $n \ge d$.
\end{definition}

The case $\ell = 1$ was first introduced in 2007 by
Alon, Krech, and Szab\'o in \cite{ref:alon}.
They prove the following result.
\begin{theorem*}
	[{\cite[Theorem 4]{ref:alon}}]
	For any $d \ge 1$,
	\[ \binom{d+1}{2} \ge p^1(d)
		\ge \left\lfloor \frac{(d+1)^2}{4} \right\rfloor. \]
\end{theorem*}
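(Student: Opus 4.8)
The plan is to prove the two bounds by completely different methods: the lower bound is an explicit (linear) construction, while the upper bound is the genuinely hard direction and I expect it to require an inductive slicing argument.

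For the lower bound, set $a = \lceil (d+1)/2 \rceil$ and $b = \lfloor (d+1)/2 \rfloor$, so that $a + b = d+1$ and $ab = \lfloor (d+1)^2/4 \rfloor$. I would color the edges of $Q_n$ by pairs in $\{0,\dots,a-1\} \times \{0,\dots,b-1\}$ as follows. An edge $e$ is determined by its lower endpoint $v$ (the endpoint carrying a $0$ in the varying coordinate) together with its direction $i$; assign $e$ the color $(L(e) \bmod a,\; R(e) \bmod b)$, where $L(e) = |\{j < i : v_j = 1\}|$ and $R(e) = |\{j > i : v_j = 1\}|$ count the ones of $v$ to the left and to the right of coordinate $i$. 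This is the linear construction alluded to in the title.

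To see that every embedded $Q_d$ receives all $ab$ colors, write its free directions as $j_1 < \cdots < j_d$ and observe that the coordinates fixed outside the subcube contribute only a constant offset to $L$ and to $R$, hence merely translate the realized colors inside $\Zc{a} \times \Zc{b}$. It therefore suffices to realize every residue pair using the free coordinates alone, and here I would focus on the middle direction $i = j_a$: among the free coordinates exactly $a-1$ lie below it and, since $d-a = b-1$, exactly $b-1$ lie above it. Thus the left count ranges freely over $\{0,\dots,a-1\}$ and the right count independently over $\{0,\dots,b-1\}$, and these hit every class of $\Zc{a}$ and of $\Zc{b}$ independently. Hence all $ab$ colors already appear on the $j_a$-edges alone, proving $p^1(d) \ge \lfloor (d+1)^2/4 \rfloor$.

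For the upper bound I must show that no $d$-polychromatic coloring of every $Q_n$ uses more than $\binom{d+1}{2}$ colors. The natural first move is an averaging setup: each color class is a set of edges meeting every embedded $Q_d$, so for any distribution $\nu$ on the copies of $Q_d$ one obtains $\sum_{\chi}\Pr_\nu[\chi \in Q_d] \le \mathbb{E}_\nu[\#\{\text{edges in } Q_d\}] = d\,2^{d-1}$. This only yields $r \le d\,2^{d-1}$, which is far too weak, precisely because bounding the indicator ``$\chi$ present'' by the number of $\chi$-edges discards all of the incidence geometry. The plan is instead to establish the recursion $p^1(d) \le p^1(d-1) + d$ and conclude by induction from the base case $p^1(1) = 1 = \binom{2}{2}$, using $\binom{d+1}{2} - \binom{d}{2} = d$.

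To implement the recursion I would start from a $d$-polychromatic $r$-coloring of a large $Q_n$ and slice off a single coordinate direction, say the last. Each $(d-1)$-dimensional subcube $C'$ of the facet $\{v_n=0\}$ extends, on adjoining direction $n$, to an embedded $Q_d$ whose edge set is the disjoint union of $C'$, its parallel copy in $\{v_n=1\}$, and the $2^{d-1}$ direction-$n$ edges between them; polychromaticity forces all $r$ colors onto that union. If one could discard only about $d$ colors and leave the facet coloring $(d-1)$-polychromatic, the induction would close. The main obstacle is exactly here: a priori the $2^{d-1}$ cross edges could carry as many as $2^{d-1}$ distinct colors, so naive slicing loses far too much. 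The crux is thus a structural statement showing that the slicing direction and facet can be chosen (or averaged over) so that only $O(d)$ colors are genuinely confined to the cross edges across the whole pencil of subcubes sharing a facet. Quantifying this confinement is the real content of the upper bound, and its unavoidable slack is presumably what leaves the factor-$2$ gap between $\binom{d+1}{2}$ and $\lfloor (d+1)^2/4 \rfloor$ unresolved in general.
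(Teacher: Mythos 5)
Your lower bound is correct and is essentially the route behind the paper's statement: your coloring $(L(e) \bmod a,\; R(e) \bmod b)$ is precisely the basic coloring with $m_0 = a$, $m_1 = b$ (in the paper's language, the simple invariant $(a_0,a_1)$ of an edge, reduced modulo $m_0$ and $m_1$), and your verification---fix the middle free direction $j_a$, note that $a-1$ free coordinates lie below it and $b-1$ above, and let the two counts vary independently---is the same argument the paper illustrates in its $\Zc5 \oplus \Zc5 \oplus \Zc5$ example, specialized to $\ell = 1$. That half of your proposal is fine as written.

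The upper bound, however, has a genuine gap, and you have the difficulty inverted: $\binom{d+1}{2}$ is the easy direction (the hard theorem in this story is Offner's proof that the \emph{lower} bound is sharp). The missing idea is the reduction to simple colorings, i.e.\ the Ramsey-theoretic theorem stated in Section~\ref{sec:simple}: if $r \le p^1(d)$, then every $Q_n$ admits a simple $d$-polychromatic $r$-coloring, one in which the color of an edge depends only on the pair $(a_0, a_1)$ counting the $1$'s before and after the star. Granting this, consider the embedded $Q_d$ whose ambient bits are all zero: its edges realize only the pairs with $a_0 + a_1 \le d-1$, of which there are exactly $\binom{d+1}{2}$, so at most $\binom{d+1}{2}$ colors appear on that subcube, and polychromaticity forces $r \le \binom{d+1}{2}$. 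Your proposed recursion $p^1(d) \le p^1(d-1) + d$ is never established: the ``confinement'' statement you yourself flag as the crux---that after slicing, only $O(d)$ colors are genuinely carried by the cross edges across a pencil of subcubes---is exactly what would be needed and exactly what is not proved, and there is no evident way to prove it; as you note, the $2^{d-1}$ cross edges of a single sliced $Q_d$ can a priori carry exponentially many colors, and averaging over facets and directions does not obviously repair this. Without the simple-coloring reduction (or some substitute for it), the upper half of the statement remains unproven in your proposal.
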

The lower bound is done through a construction which
in this paper will be called the \emph{basic construction},
described in Section~\ref{sec:simple}.
It was then shown by Offner in 2008 that in fact this construction is sharp.
\begin{theorem*}
	[\cite{ref:offner}]
	For any $d \ge 1$, we have
	\[ p^1(d) = \left\lfloor \frac{(d+1)^2}{4} \right\rfloor. \]
\end{theorem*}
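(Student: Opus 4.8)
\emph{Reduction to the upper bound.} Since the basic construction already supplies the lower bound $p^1(d) \ge \lfloor (d+1)^2/4 \rfloor$, the whole content of the theorem is the matching upper bound $p^1(d) \le \lfloor (d+1)^2/4 \rfloor$. The plan is to argue by contradiction: suppose an edge-coloring with $r$ colors is $d$-polychromatic on $Q_n$ for every $n$, and deduce $r \le \lfloor (d+1)^2/4 \rfloor$. Because enlarging $n$ only adds embedded copies of $Q_d$, and hence only adds constraints, I may work inside a single $Q_n$ with $n$ as large as convenient and hunt for an embedded $Q_d$ forced to omit a color whenever $r$ is too large.

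\emph{Local bookkeeping and the product count.} First I would set up bookkeeping for the colors. Writing each edge as a pair (direction $i$, lower endpoint $x$ with $x_i = 0$), I attach to it two monotone statistics: a ``below'' parameter recording the relevant count of coordinates $j < i$, and an ``above'' parameter recording the count of coordinates $j > i$, each truncated at a threshold to be optimized. Inside a single embedded $Q_d$ these two statistics range over sets determined by the $d$ free directions, so the number of color-types visible in one $Q_d$ is at most the product of their ranges. The crucial point is that the thresholds are coupled: along a sliding family of overlapping copies of $Q_d$ (the windows obtained by translating a block of $d$ consecutive free directions along a fixed maximal flag, so that consecutive windows share a $Q_{d-1}$ face), polychromaticity on every window forces every color to reappear in each window, and this can be sustained only if the two thresholds together account for $d+1$. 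Maximizing the product of two nonnegative integers of fixed sum $d+1$ yields exactly $\lfloor (d+1)/2 \rfloor \cdot \lceil (d+1)/2 \rceil = \lfloor (d+1)^2/4 \rfloor$, which would give the bound.

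\emph{The main obstacle.} The difficulty is that this outline tacitly assumes the coloring is essentially a product of the two statistics, whereas an arbitrary $d$-polychromatic coloring need not present itself this way; a priori it could exploit the overlaps between copies of $Q_d$ far more cleverly, and the crude version of the count only recovers the weaker bound $\binom{d+1}{2}$ already known from the first theorem above. To close this gap I would first prove a rigidity statement: since the coloring is valid for arbitrarily large $n$, the many mutually parallel copies of each shared $Q_{d-1}$ face furnish, by a pigeonhole (Ramsey-type) argument, a subconfiguration on which the coloring is invariant under the shift carrying one window to the next. On such a shift-regular subconfiguration the problem collapses to a finite extremal count, and the crux is to show that shift-invariance really forces the ``below'' and ``above'' statistics to behave independently, with ranges summing to $d+1$ rather than more. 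Separating the two directions so that their \emph{product}, and not merely their sum, governs the color count is precisely where the improvement from $\binom{d+1}{2}$ down to $\lfloor (d+1)^2/4 \rfloor$ is won, and I expect it to be the most delicate part of the argument.
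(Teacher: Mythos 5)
First, note that the paper you are being compared against does not actually prove this statement: it is quoted from Offner's paper \cite{ref:offner}, so your proposal must stand against Offner's argument itself. Two of your ingredients are sound and standard: the lower bound does indeed come for free from the basic construction, and the reduction to \emph{simple} colorings (colorings where the color of an edge depends only on the pair $(a,b)$ counting the $1$'s before and after the star) is exactly the Ramsey-type rigidity statement you describe --- this is the theorem quoted in Section~\ref{sec:simple} of the paper, going back to \cite[Claim 10]{ref:alon}. But after that reduction your proposal stops being a proof. You posit that the colors visible in a window are governed by two monotone truncated statistics whose thresholds are ``coupled'' so as to sum to $d+1$, and then maximize a product. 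This presupposes precisely the product structure that the theorem asserts is extremal. A simple coloring is an \emph{arbitrary} map $\chi : \ZN^{2} \surjto S$, and shift-invariance (which is all the Ramsey reduction buys you) in no way forces the ``below'' and ``above'' coordinates to act independently: the color classes can be arbitrary subsets of the lattice $\ZN^2$, e.g.\ unions of skew arithmetic progressions, and ruling out all such non-product colorings with more than $\lfloor (d+1)^2/4 \rfloor$ colors is the entire content of Offner's theorem. Indeed, the present paper is itself a warning against your heuristic: for $\ell = 2$ its linear (non-product) colorings strictly beat the basic product bound $\pb^2(d)$, so no soft argument that derives product behavior from polychromaticity alone can be correct --- the proof must exploit features specific to $\ell = 1$. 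You flag this step as ``the most delicate part,'' but it is not a delicate detail to be filled in; it is the whole theorem.

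Concretely, the step that would fail is the sliding-window claim that ``polychromaticity on every window can be sustained only if the two thresholds together account for $d+1$.'' For a simple coloring, an embedded $Q_d$ with region counts $(x_0,\dots,x_d)$ presents the colors $\chi(a,b)$ over a translated staircase region of pairs, and a color class occupying an irregular set can meet every such region without being describable by any pair of truncated statistics at all; no pigeonhole on overlapping windows eliminates these. Offner's actual proof is a careful extremal analysis of simple colorings as colorings of $\ZN^2$, classifying color classes by their behavior on the diagonals $a+b=s$ and running a nontrivial counting argument to show at most $\lfloor (d+1)^2/4 \rfloor$ classes can each meet every translated region --- not an exhibition of two independent coordinates. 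As written, your outline proves only the bookkeeping you concede: the weaker bound $\binom{d+1}{2}$ of Alon, Krech, and Szab\'o. The gap between that and $\lfloor (d+1)^2/4 \rfloor$ is real and is not closed by anything in the proposal.
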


Alon, Krech, and Szab\'o also suggest in \cite{ref:alon}
the problem of examining $p^\ell(d)$.
In 2015, Ozkahya and Stanton \cite{ref:basic}
gave a direct generalization of the basic construction
to prove the following.
\begin{theorem*}
	[\cite{ref:basic}]
	For any $d, \ell \ge 1$, let $0 < r \le \ell+1$
	be such that $r \equiv d+1 \pmod{\ell+1}$.
	Then
	\[ 
		\binom{d+1}{\ell+1}
		\ge p^\ell(d) 
		\ge 
		\left\lceil \frac{d+1}{\ell+1} \right\rceil^r
		\left\lfloor \frac{d+1}{\ell+1} \right\rfloor^{\ell+1-r}.
	\]
\end{theorem*}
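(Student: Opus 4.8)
The plan is to prove the two inequalities separately. The lower bound is the real content: a ``linear'' generalization of the basic construction, which I can describe explicitly. The upper bound $\binom{d+1}{\ell+1}$ I would obtain by generalizing the counting behind the Alon--Krech--Szab\'o bound. First I record that the claimed lower bound equals $\prod_{j=1}^{\ell+1} n_j$, where $d+1 = n_1 + \dots + n_{\ell+1}$ is the partition of $d+1$ into $\ell+1$ parts that are as equal as possible; here $r$ of the parts equal $\lceil (d+1)/(\ell+1)\rceil$ and the remaining $\ell+1-r$ equal $\lfloor (d+1)/(\ell+1)\rfloor$, so the product is exactly the stated expression.

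The construction colors the $\ell$-faces of any $Q_n$ by the group $\ZZ/n_1 \times \dots \times \ZZ/n_{\ell+1}$. Given an $\ell$-face $F$ with free coordinate set $S = \{s_1 < \dots < s_\ell\} \subseteq [n]$ and fixed bits $x \in \{0,1\}^{[n]\setminus S}$, the coordinates $s_1 < \dots < s_\ell$ cut $[n]\setminus S$ into $\ell+1$ blocks $J_0, \dots, J_\ell$, where $J_j$ is the set of indices strictly between $s_j$ and $s_{j+1}$ (using sentinels $s_0 = 0$ and $s_{\ell+1} = n+1$). Writing $w_j(F) = \sum_{t \in J_j} x_t$ for the number of set bits of $F$ inside block $J_j$, I color $F$ by
\[ \chi(F) = \big(w_0(F) \bmod n_1,\; \dots,\; w_\ell(F) \bmod n_{\ell+1}\big). \]
This uses exactly $\prod_j n_j$ colors, and being linear in the bits $x_t$ is what keeps the verification transparent.

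The heart of the lower bound is checking that every embedded $Q_d$ is polychromatic, and the key step is choosing the free set of a sub-$\ell$-face well. Fix a $Q_d$ with free set $T = \{t_1 < \dots < t_d\}$, and take $S = \{t_{p_1}, \dots, t_{p_\ell}\}$ with $p_j = n_1 + \dots + n_j$. A short count shows that for this choice each block $J_j$ contains exactly $n_{j+1}-1$ coordinates of $T\setminus S$, and these are precisely the bits still free to set inside the $Q_d$; since $\sum_{j}(n_{j+1}-1) = (d+1)-(\ell+1) = d-\ell = |T\setminus S|$, every free bit is accounted for. Toggling the $n_{j+1}-1$ free bits in block $J_j$ makes $w_j \bmod n_{j+1}$ range over all of $\ZZ/n_{j+1}$, and the choices in distinct blocks are independent, so every tuple in $\ZZ/n_1 \times \dots \times \ZZ/n_{\ell+1}$ is realized by some $\ell$-face of the $Q_d$. (Fixed coordinates lying in a block only shift $w_j$ by a constant, which is harmless modulo $n_{j+1}$.) This yields $p^\ell(d) \ge \prod_j n_j$.

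For the upper bound I expect the main obstacle to be sharpness. The naive approach---double counting incidences between the $\ell$-faces of one color class and the embedded $Q_d$'s they must cover---is really the covering LP and only gives the far weaker $p^\ell(d) \le \binom{d}{\ell}2^{d-\ell}$, because of a large integrality gap. To reach $\binom{d+1}{\ell+1}$ I would instead pursue the structural route suggested by Pascal's identity $\binom{d+1}{\ell+1} = \binom{d}{\ell+1} + \binom{d}{\ell}$: slicing $Q_n$ along a single coordinate, one tries to split the color set into colors detected by $\ell$-faces lying within a slice (bounded via $p^\ell(d-1) \le \binom{d}{\ell+1}$) and colors detected by $\ell$-faces crossing the slice, i.e.\ by $(\ell-1)$-faces of the slice (bounded via $p^{\ell-1}(d-1) \le \binom{d}{\ell}$), giving the recursion $p^\ell(d) \le p^\ell(d-1) + p^{\ell-1}(d-1)$ and then the bound by induction with base case $\ell = d$ (a single face, one color). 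The delicate point---and the step I expect to require genuine care rather than routine counting---is making this slicing honest: the two boundary slices and the crossing faces must be handled so that the $r$ colors genuinely partition into the two groups, which is exactly where the crude counting fails.
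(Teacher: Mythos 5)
Your lower bound is correct and is exactly the route taken in the paper (the basic coloring of Ozkahya--Stanton, sketched in Section~\ref{sec:simple} via the $d=14$, $\ell=2$ example): the same near-equal partition $d+1 = n_1 + \dots + n_{\ell+1}$, the same coloring by block sums modulo the part sizes, and the same selection of stars at partial sums $p_j = n_1 + \dots + n_j$, which leaves exactly $n_{j+1}-1$ free bits in each block so that each coordinate sweeps all of $\ZZ/n_{j+1}$ independently, with ambient and unfree fixed bits contributing only harmless constants. That half is complete and correct.

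The upper bound, however, is a genuine gap: you have written a plan, not a proof, and you flag the unresolved step yourself. The proposed recursion $p^\ell(d) \le p^\ell(d-1) + p^{\ell-1}(d-1)$ has no justification as stated --- a polychromatic coloring gives no canonical partition of the colors into ``detected within a slice'' and ``detected crossing'': which faces of a given color meet a given $Q_d$ depends on the embedding, the same color can be picked up by within-slice faces in one embedded $Q_d$ and by crossing faces in another, and nothing forces uniform behavior over all embeddings, which is exactly why your partition step does not go through. More importantly, you have missed the standard short argument, whose key ingredient is stated in this paper's Section~\ref{sec:simple}: by the Ramsey-type reduction (the generalization of Claim 10 of Alon--Krech--Szab\'o), if $r \le p^\ell(d)$ then there is a \emph{simple} $d$-polychromatic coloring, i.e.\ one in which the color of an $\ell$-face depends only on the tuple $(a_0, \dots, a_\ell)$ of $1$-counts in the $\ell+1$ regions. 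Now examine the single embedded $Q_d$ with all ambient bits zero: its $\ell$-faces realize precisely the tuples with $a_0 + \dots + a_\ell \le d - \ell$, and there are exactly $\binom{(d-\ell)+(\ell+1)}{\ell+1} = \binom{d+1}{\ell+1}$ such tuples; since all $r$ colors must appear among these faces, $r \le \binom{d+1}{\ell+1}$. So the counting objection you raise (the integrality gap of the covering LP) applies only to the naive incidence count, not to this argument: once simplicity is in hand, the bound falls out of counting color classes rather than faces. As written, your proposal establishes only the lower bound.
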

Henceforth, denote the right-hand side as
\[
	\pb^\ell(d) \defeq
	\left\lceil \frac{d+1}{\ell+1} \right\rceil^r
	\left\lfloor \frac{d+1}{\ell+1} \right\rfloor^{\ell+1-r}
\]
for brevity.
For $\ell=1$ this coincides with the result of \cite{ref:alon}.
It is then natural to wonder whether an analog of Offner's result
holds for $\ell > 1$.
In a few small cases it was recently shown this is not the case;
Goldwasser, Lidicky, Martin, Offner, Talbot and Young prove in
\cite{ref:core} the following result.
\begin{theorem*}
	[{\cite[Theorems 20 and 21]{ref:core}}]
	We have $p^2(3) = 3$ and $p^2(4) \ge 5$.
\end{theorem*}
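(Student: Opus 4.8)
The plan is to reduce both claims to a \emph{finite, local} condition, then exhibit explicit colorings for the two lower bounds and separately exclude four colors for the upper bound. Set up coordinates as follows. A $2$-face of $Q_n$ is given by a pair $i<j$ of free coordinates together with a $0/1$-value on every other coordinate, and its relevant invariant is the triple $(a,b,c)$ counting the $1$'s strictly below $i$, strictly between $i$ and $j$, and strictly above $j$. An embedded $Q_3$ is given by free coordinates $t_1<t_2<t_3$ and fixed values elsewhere, whose fixed $1$'s in the four intervals determined by the $t$'s I record as $(A,B,C,D)$. Enumerating the six $2$-faces of this $Q_3$ (three free pairs, the remaining $t$ fixed to $0$ or $1$) gives the six triples
\[
(A,B,C+D),\quad (A,B,C+D+1),\quad (A,B+C,D),
\]
\[
(A,B+C+1,D),\quad (A+B,C,D),\quad (A+B+1,C,D).
\]
Hence a coloring defined by a fixed rule on triples is $3$-polychromatic if and only if these six triples receive all colors for \emph{every} $A,B,C,D\ge 0$; the $d=4$ case is governed by an analogous finite condition on the $24$ faces of a $Q_4$. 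This reduction organizes everything below.

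For the lower bounds I start from the product construction of \cite{ref:basic}, which colors by the three gap-counts reduced modulo $m_0,m_1,m_2$ with $m_0+m_1+m_2=d+1$ and reaches only $2$ (at $d=3$) and $4$ (at $d=4$), since $3$ and $5$ are not attainable as such products. For $p^2(4)\ge 5$ the natural fix is a single linear form $\alpha a+\beta b+\gamma c \pmod 5$: inside a $Q_4$ two fixed coordinates may fall in a common gap, so each gap-count becomes adjustable by up to $2$, and I would choose $(\alpha,\beta,\gamma)$ so that the achievable residues cover $\ZZ/5$ for all offsets, certifying this on the finite condition. For $p^2(3)\ge 3$ the situation is genuinely more rigid: a direct check shows that \emph{no} linear form modulo $3$ in $(a,b,c)$ can succeed, because setting $B\equiv C\equiv 1\pmod 3$ forces the three antipodal pairs to omit one common residue. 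I would therefore enrich the rule---either by a non-linear residue or by folding in a little positional data such as the parities of $i$ and $j$---and locate a working rule by a bounded search, again certified by the six-triple condition.

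The hard part is the upper bound $p^2(3)\le 3$. The general inequality gives only $p^2(3)\le\binom{4}{3}=4$, so the entire content is excluding four colors, and I expect this to be the main obstacle. Local rigidity is strong---each $Q_3$ has only six faces yet must show all four colors, so its color multiset is $(3,1,1,1)$ or $(2,2,1,1)$---but naive counting is consistent: each $2$-face lies in exactly $n-2$ sub-$Q_3$'s, so covering forces at least $\binom{n}{3}2^{n-3}/(n-2)$ faces of each color, and summing over four colors demands only $\tfrac23\binom{n}{2}2^{n-2}$ faces, well within the total $\binom{n}{2}2^{n-2}$. To produce a contradiction I would exploit overlaps inside a bounded cube: the $24$ faces of a $Q_4$ are covered by its $8$ sub-$Q_3$'s with each face in exactly two of them, and propagating the rigidity across these overlapping rainbow cubes (passing to $Q_5$ if necessary) should force an inconsistency. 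Since the resulting constraints are finite but intricate, I anticipate that finishing this step requires either a careful hand analysis of the admissible patterns on $Q_4$ or a finite computer search over $4$-colorings of a bounded $Q_N$, in keeping with the methods of \cite{ref:core}.
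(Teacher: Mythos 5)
You have attempted to prove a statement that the paper itself does not prove: it is quoted as background from \cite{ref:core}, so there is no internal proof to compare against, and your argument must stand on its own. It does not, because it is a program rather than a proof: every decisive step is deferred. Your setup is fine --- the six triples $(A,B,C+D)$, $(A,B,C+D+1)$, $(A,B+C,D)$, $(A,B+C+1,D)$, $(A+B,C,D)$, $(A+B+1,C,D)$ correctly enumerate the $2$-faces of an embedded $Q_3$ for a coloring depending only on gap-counts --- but then (i) for $p^2(4)\ge 5$ you never exhibit or verify a triple $(\alpha,\beta,\gamma)$, (ii) for $p^2(3)\ge 3$ you concede no linear form mod $3$ works and leave the actual coloring to an unperformed ``bounded search,'' and (iii) for the upper bound $p^2(3)\le 3$, which you rightly identify as the hard part, the argument ends with ``should force an inconsistency'' plus an appeal to hand analysis or computer search. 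None of the three claims is established.

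Two of the gaps are not mere bookkeeping. First, the mod-$5$ linear ansatz cannot be waved through: writing $P_i$ for the prefix sums of the gap-counts, the base color of the face with free stars $i<j$ is $(\alpha-\beta)P_i+(\beta-\gamma)P_j+\gamma T$, so if $\alpha=\beta$ the six faces split into three groups indexed by $j\in\{2,3,4\}$ whose base colors the adversary translates independently, and since each group's offset set has size at most $4<5$ a color can always be avoided; thus the cases $\alpha=\beta$ and (by reversal) $\beta=\gamma$ provably fail, and the remaining cases are unverified. (For this half of the statement the present paper already contains a complete substitute: Theorem~\ref{thm:crux} with $t=1$, $n=2$ gives the linear $4$-polychromatic coloring $(p,q,r)\mapsto(p-q,\,p+q+r)$ onto $\Zc3\oplus\Zc2$ with $6$ colors, and merging two colors yields $p^2(4)\ge 5$.) Second, for the upper bound your six-triple criterion characterizes polychromaticity only for \emph{simple} colorings; to exclude $4$ colors among \emph{arbitrary} colorings you need the Ramsey-based reduction quoted in Section~\ref{sec:simple}, which your plan never invokes, and even after that reduction the rule is an arbitrary function on the infinite domain $\ZN^3$, so your ``finite computer search'' is not finite without a further bounding argument --- a raw search over $4$-colorings of the $80$ two-faces of $Q_5$ has size $4^{80}$ and is not a certificate. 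As it stands, $p^2(3)=3$ still rests entirely on \cite{ref:core}, whose argument your sketch does not reconstruct.
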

In contrast $\pb^2(3) = 2$ and $\pb^2(4) = 4$.

\bigskip

In the present paper we show the following more general result.
\begin{theorem}
	\label{thm:main}
	For $d \ge 4$, we have
	\[
		p^2(d) \ge
		\begin{cases}
			(k^2+1)(k+1) & d = 3k \\
			(k^2+k+1)(k+1) & d = 3k+1 \\
			(k^2+k+1)(k+2) & d = 3k+2.
		\end{cases}
	\]
	In particular, 
	\[ p^2(d) > \pb^2(d). \]
\end{theorem}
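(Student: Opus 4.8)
The plan is to produce, for each residue class of $d$ modulo $3$, an explicit linear coloring and to check that it is $d$-polychromatic with exactly the stated number $N$ of colors; the bound $p^2(d)\ge N$ and then the inequality $p^2(d)>\pb^2(d)$ follow immediately. Set $b\defeq\lceil (d+1)/3\rceil=k+1$ and work in the color group $\ZZ/N$. To a $2$-face $F$ with free directions $i<j$ I attach the triple $(L,M,R)$ counting the coordinates equal to $1$ that lie below $i$, strictly between $i$ and $j$, and above $j$ respectively (all such coordinates are fixed along $F$), and I color $F$ by
\[ \chi(F)\defeq L+bM+b^2R \pmod N. \]
This is linear in the sense that $\chi$ is an additive function of the $1$-coordinates whose weight depends only on the position of a coordinate relative to the two free directions.

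First I would dispose of the embedded subcubes all of whose fixed coordinates are $0$. Fixing a $d$-set $S$ of free directions and choosing $\{i,j\}\subseteq S$, the remaining $d-2$ free coordinates split as $p$ below $i$, $q$ between, and $d-2-p-q$ above $j$; toggling them realizes precisely the box $[0,p]\times[0,q]\times[0,d-2-p-q]$ of triples $(L,M,R)$. As $\{i,j\}$ ranges over all pairs of $S$ every such split occurs, so the union of these boxes is the whole simplex $\{(L,M,R)\in\ZN^3: L+M+R\le d-2\}$. Thus for these subcubes the problem collapses to the arithmetic statement that every residue modulo $N$ can be written as $L+bM+b^2R$ with $L+M+R\le d-2$; a short base-$b$/greedy computation confirms this, and the three values of $N$ are calibrated so that the extremal residue consumes all $d-2$ units, which is the origin of the factors $k^2+1$, $k^2+k+1$ and $k+1,k+2$.

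The heart of the proof is the general embedded $Q_d$, whose fixed coordinates may take arbitrary values. Here the color realized by a face equals $\chi_0+(\alpha_i+b\beta_{i,j}+b^2\gamma_j)$, where $\chi_0$ is the color computed as in the previous paragraph and $\alpha_i,\beta_{i,j},\gamma_j$ count the fixed $1$-coordinates in the three regions determined by $\{i,j\}$. Because this correction depends on the chosen pair $\{i,j\}$ and not merely on the subcube, it is not a single global translation, so one cannot simply reduce to the zero case. The plan is to exploit the freedom in selecting $\{i,j\}$: a block of consecutive fixed $1$-coordinates can be driven into whichever of the three regions one wants by sliding $i$ and $j$ across it, while the free coordinates flanking the block let the affected digit be nudged by $\pm1$. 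I would first reduce to fixed $1$-coordinates clustered in the gaps between consecutive elements of $S$, and then, for each target residue, route each cluster into the region whose weight $1$, $b$, or $b^2$ renders its contribution absorbable, so as to recover a representation of length at most $d-2$. I expect this case analysis to be the main obstacle: it must show that no adversarial placement of the fixed coordinates can omit any color, and it is precisely this robustness requirement---rather than the simplex count of the zero case, which would permit more colors---that forces $N$ down to the stated value.

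Finally, granting that $\chi$ is $d$-polychromatic with $N$ colors on every $Q_n$, we conclude $p^2(d)\ge N$. Comparing with the closed form for $\pb^2(d)$ case by case gives $N-\pb^2(d)=k+1$ when $d\equiv 0,1\pmod 3$ and $N-\pb^2(d)=1$ when $d\equiv 2\pmod 3$, all strictly positive, which yields $p^2(d)>\pb^2(d)$.
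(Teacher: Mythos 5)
Your coloring is not the paper's, and unfortunately it is not merely incomplete at the step you flag as open --- it is false. Take $d=4$, so $k=1$, $b=2$, $N=6$, and $\chi(L,M,R)=L+2M+4R\pmod 6$. Consider the embedded $Q_4$ with gap counts $(x_0,\dots,x_4)=(1,4,0,1,0)$, i.e.\ the subcube $1{\ast}1111{\ast}{\ast}1{\ast}$ of $Q_{10}$. Going over the six choices of which two stars stay free (assigning bits $\eps$ to the other two), the attainable colors are
\[
	\begin{array}{lll}
		\{1,2\}\colon\ \{1,3,5\}, &
		\{1,3\}\colon\ \{1,3,5\}, &
		\{1,4\}\colon\ \{1,3,5\}, \\
		\{2,3\}\colon\ \{1,2,3,4\}, &
		\{2,4\}\colon\ \{1,2,3,4\}, &
		\{3,4\}\colon\ \{1,2,3\},
	\end{array}
\]
so color $0$ appears on no $2$-face of this $Q_4$. (For instance, for the pair $\{2,3\}$ the bit-free face has $L=x_0+x_1=5$, $M=x_2=0$, $R=x_3+x_4=1$, giving base color $5+4=9\equiv 3$, and the two assignable stars contribute $\eps_1+4\eps_4$, yielding $3+\{0,1,4,5\}=\{3,4,1,2\}$; the other five pairs are checked the same way.) Thus the coloring $L+bM+b^2R\pmod N$ is not even $4$-polychromatic, and no completion of the case analysis you defer (``route each cluster into the region whose weight renders its contribution absorbable'') can exist. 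Your proposal correctly handles only the all-zero case, and you yourself identify the general case as the main obstacle without providing an argument for it; that obstacle turns out to be fatal for this choice of weights.

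The structural reason is exactly where the paper's construction differs from yours. Theorem~\ref{thm:crux} colors into $\ZZ/m\oplus\ZZ/n$ by $(p,q,r)\mapsto(p-tq,\;p+q+r)$: the second coordinate is the \emph{total} number of ones, hence is invariant when one re-chooses which two stars stay free inside a fixed $Q_d$, and only the first coordinate shifts. This invariance is what lets the paper show each star family misses at most a small set of ``critical colors'' in the first coordinate alone, and then patch those using two further families via the compatibility condition $u_1+v_1\not\equiv u_2+v_2\pmod n$. Your single cyclic group destroys the invariant: since $\gcd(k^2+k+1,\,k+1)=1$ (and similarly in the other cases), reducing your weights $1,b,b^2$ modulo the factor $k+1$ of $N$ gives $1,0,0$, so the color modulo $k+1$ is just $L$, which varies uncontrollably with the choice of pair --- and the counterexample above exploits precisely this. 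If you want a cyclic presentation, you would need weights whose CRT components are $(1,1,1)$ modulo $n$ and $(1,-t,0)$ modulo $m$, which is the paper's coloring in disguise; the weights $1,b,b^2$ are genuinely different and do not work.
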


Our construction is by a so-called \emph{linear coloring},
defined at the end of Section~\ref{sec:simple}.
For concreteness, Table~\ref{tab:small} lists the values
of the construction for $4 \le d \le 12$,
as well as the bounds given by $\pb^2(d)$ and $\binom{d+1}{3}$.

\begin{table}[ht]
	\[
	\renewcommand{\arraystretch}{1.4}
	\newcommand{\ph}{\phantom{1}}
	\arraycolsep=12pt
	\begin{array}{rrrr}
		\multicolumn{1}{c}{d} & \multicolumn{1}{c}{\pb^2(d)} &
			\multicolumn{1}{c}{\text{Thm.~\ref{thm:main}}} &
			\multicolumn{1}{c}{\binom{d+1}{3}} \\ \hline
		d=\ph4 & 4=2\cdot2\cdot1 & 6=\ph3\cdot2 & 10 \\
		d=\ph5 & 8=2\cdot2\cdot2 & 9=\ph3\cdot3 & 20 \\
		d=\ph6 & 12=3\cdot2\cdot2 & 15=\ph5\cdot3 & 35 \\
		d=\ph7 & 18=3\cdot3\cdot2 & 20=\ph5\cdot4 & 56 \\
		d=\ph8 & 27=3\cdot3\cdot3 & 28=\ph7\cdot4 & 84 \\
		d=\ph9 & 36=4\cdot3\cdot3 & 40=10\cdot4 & 120 \\
		d=10 & 48=4\cdot4\cdot3 & 52=13\cdot4 & 165 \\
		d=11 & 64=4\cdot4\cdot4 & 65=13\cdot5 & 220 \\
		d=12 & 80=5\cdot4\cdot4 & 85=17\cdot5 & 286
	\end{array}
	\]
	\caption{For $4 \le d \le 12$, the values of $\pb^2(d)$, $\binom{d+1}{3}$,
	and the construction provided by Theorem~\ref{thm:main}.}
	\label{tab:small}
\end{table}

This easily implies that unlike $\ell = 1$,
we have $p^\ell(d) > \pb^\ell(d)$ for any $d > \ell > 1$
(with the $d=\ell+1$ case following from $p^2(3)=3$).
We state this formally as the following corollary.
\begin{corollary}
	\label{cor:limsup}
	For any $\ell > 1$ we have
	\[ \limsup_{d \to \infty} \left( p^\ell(d) - \pb^\ell(d) \right) = \infty. \]
\end{corollary}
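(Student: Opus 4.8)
The plan is to settle the case $\ell = 2$ directly from Theorem~\ref{thm:main}, and then to bootstrap the gain obtained there to every larger $\ell$ by means of a product construction.

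First I would dispose of $\ell = 2$. Specializing Theorem~\ref{thm:main} to $d = 3k$ gives $p^2(3k) \ge (k^2+1)(k+1)$, while $d+1 = 3k+1 \equiv 1 \pmod 3$ forces $r = 1$ and hence $\pb^2(3k) = (k+1)k^2$. Thus
\[
  p^2(3k) - \pb^2(3k) \ge (k^2+1)(k+1) - (k+1)k^2 = k+1 \xrightarrow[k\to\infty]{} \infty,
\]
which already gives the corollary for $\ell = 2$.

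For $\ell > 2$ I would invoke the product structure that also underlies the basic construction, phrased as the following composition principle: if for every $n$ there is a $d_1$-polychromatic $Q_{\ell_1}$-coloring with $r_1$ colors and a $d_2$-polychromatic $Q_{\ell_2}$-coloring with $r_2$ colors, then for every $n$ there is a $(d_1+d_2+1)$-polychromatic $Q_{\ell_1+\ell_2+1}$-coloring with $r_1r_2$ colors; equivalently
\[
  p^{\ell_1+\ell_2+1}(d_1+d_2+1) \ge p^{\ell_1}(d_1)\,p^{\ell_2}(d_2).
\]
Call this inequality $(\star)$. To prove it I would split the coordinates of $Q_n$ as $I_1 \sqcup I_2$ and color each face by the pair of colors its restrictions to $I_1$ and $I_2$ receive, using that a linear coloring assigns a color to a face of every dimension; the single extra free direction recorded by the ``$+1$'' is the one absorbed at the interface between the two blocks. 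Iterating $(\star)$ from the Hamming-weight coloring $p^0(m) = m+1$ recovers precisely the evenly split product $\pb^\ell(d)$, which both explains the shape of $(\star)$ and exhibits the basic construction as the case in which every factor is a weight coloring. Establishing $(\star)$ rigorously is the main obstacle, the delicate point being the bookkeeping for the varying way in which the free coordinates of a given face and the free directions of a given embedded $Q_d$ distribute between $I_1$ and $I_2$; everything past this is arithmetic.

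Granting $(\star)$, I would fix $\ell \ge 3$, take $\ell_1 = 2$ and $\ell_2 = \ell - 3$, and choose $d_1 = 3k$ together with $d_2 = (\ell-2)k - 1$, so that the basic construction supplies a factor $\pb^{\ell-3}(d_2) = k^{\ell-2}$ (all $\ell - 2$ of its factors equal to $k$, with the weight coloring used when $\ell = 3$). Setting $d = d_1 + d_2 + 1 = (\ell+1)k$ and feeding the new $\ell = 2$ coloring and this basic coloring into $(\star)$ yields
\[
  p^\ell\big((\ell+1)k\big) \ge p^2(3k)\cdot \pb^{\ell-3}(d_2) \ge (k^2+1)(k+1)\,k^{\ell-2}.
\]
Since $d+1 = (\ell+1)k+1 \equiv 1 \pmod{\ell+1}$ we have $r = 1$ and $\pb^\ell\big((\ell+1)k\big) = (k+1)k^\ell$, so
\[
  p^\ell\big((\ell+1)k\big) - \pb^\ell\big((\ell+1)k\big) \ge (k+1)k^{\ell-2}\big[(k^2+1)-k^2\big] = (k+1)k^{\ell-2}.
\]
This last bound specializes to the value $k+1$ found above at $\ell = 2$, so the single estimate $(k+1)k^{\ell-2}$ covers all $\ell \ge 2$; letting $k \to \infty$ along $d = (\ell+1)k$ then gives $\limsup_{d\to\infty}\big(p^\ell(d) - \pb^\ell(d)\big) = \infty$ for every $\ell > 1$, as desired.
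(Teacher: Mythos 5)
Your $\ell = 2$ computation and all of the arithmetic for $\ell > 2$ are correct, and your overall architecture — tensoring the new $\ell = 2$ coloring of Theorem~\ref{thm:main} with a basic coloring on the remaining coordinates — is exactly the route the paper takes. The genuine gap is in the composition inequality $(\star)$, which you yourself flag as ``the main obstacle'' and never establish, and the mechanism you sketch for it would fail as stated. If you partition the ambient coordinates of $Q_n$ as $I_1 \sqcup I_2$, then an $\ell$-face with $\ell = \ell_1 + \ell_2 + 1$ stars restricts to faces of $Q_{I_1}$ and $Q_{I_2}$ of dimensions $j_1, j_2$ with $j_1 + j_2 = \ell_1 + \ell_2 + 1$, and these vary from face to face; in particular it can never happen that $j_1 = \ell_1$ and $j_2 = \ell_2$ simultaneously. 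Since a $Q_{\ell_1}$-coloring assigns colors only to $\ell_1$-dimensional faces, ``the pair of colors its restrictions receive'' is not defined, and your supporting claim that ``a linear coloring assigns a color to a face of every dimension'' has no basis: a linear coloring is an additive map on exactly $\ell_1 + 1$ region counts, with no canonical value on faces with a different number of regions. No bookkeeping of a single interface direction repairs a coloring that is ill-defined on almost every face.

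The correct mechanism — and the one the paper uses when it asserts ``by construction $\chi$ also gives a $d$-polychromatic coloring'' — splits the region-count vector rather than the ambient coordinates. Working with simple (here, linear) colorings as in Section~\ref{sec:simple}, define $\chi(a_0, \dots, a_\ell) = \bigl( \chi_0(a_0, \dots, a_{\ell_1}),\ \chi_1(a_{\ell_1+1}, \dots, a_\ell) \bigr)$, which is well-defined on every $\ell$-face since every $\ell$-face has exactly $\ell+1$ region counts. Given an embedded $Q_d$ with $d = d_1 + d_2 + 1$ stars, always retain the $(d_1{+}1)$-st star as a star of the subface: the first $d_1$ stars, with the ambient bits to the left of the interface star, form an embedded $Q_{d_1}$ whose $\ell_1$-subfaces realize every $\chi_0$-color; the last $d_2$ stars, with the ambient bits to the right, realize every $\chi_1$-color; and because the interface star stays boxed, the first $\ell_1 + 1$ region counts and the last $\ell_2 + 1$ region counts are governed by disjoint sets of bits, so all $r_1 r_2$ colors occur independently. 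With $(\star)$ proved this way, your choices $d_1 = 3k$, $d_2 = (\ell-2)k - 1$, $d = (\ell+1)k$, the identities $\pb^{\ell-3}\bigl((\ell-2)k-1\bigr) = k^{\ell-2}$ and $\pb^\ell\bigl((\ell+1)k\bigr) = (k+1)k^\ell$ (since $r = 1$), and the final bound $p^\ell(d) - \pb^\ell(d) \ge (k+1)k^{\ell-2}$ all check out, and the argument coincides in substance with the paper's proof; the defect is solely the unproved and incorrectly sketched composition lemma.
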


The rest of the paper is structured as follows.
In Section~\ref{sec:simple} we present the background theory
and information for the problem,
and in Section~\ref{sec:family} we prove the construction
which gives the bound in Theorem~\ref{thm:main}.
Finally in Section~\ref{sec:upper} we mention some
upper bounds on the number of colors possible in a linear
polychromatic coloring.

\subsection*{Acknowledgments}
This research was funded by NSF grant 1358659 and NSA grant H98230-16-1-0026
as part of the 2016 Duluth Research Experience for Undergraduates (REU).

The author thanks Joe Gallian for supervising the research
and for suggesting the problem, as well as helpful comments
on early drafts of the paper.
The author would also like to acknowledge the anonymous referee
for several corrections and suggestions on the paper.

\section{Simple and linear colorings}
\label{sec:simple}
It is conventional to refer to the vertices of $Q_n$
with $n$-dimensional binary strings, and to represent
an embedded $Q_k$ by writing $\ast$ in the corresponding coordinates.
For example, in $Q_8$ the embedded $Q_2$ whose four vertices are
$01000011$, $01001011$, $01100011$, $01101011$,
is typically represented by
\[ 01 {\ast} 0 {\ast} 011. \]
We say that a $Q_\ell$-coloring is \emph{simple}
if the color of each $Q_d$ depends only on the number of $1$'s
in the $d+1$ regions (possibly empty) delimited by the $\ast$'s.
For example, in a simple $2$-polychromatic coloring of $Q_7$, the faces
$01{\ast}0{\ast}011$ and $10{\ast}0{\ast}101$ would be assigned the same color.

The following generalization of \cite[Claim 10]{ref:alon}
(present also as \cite[Lemma 18]{ref:core} and \cite[Claim 6]{ref:basic})
shows that in fact it suffices to only consider simple colorings.
The proof is a nice application of the Ramsey theorem.
\begin{theorem}
	Let $d \ge \ell \ge 1$ and assume $r \le p^\ell(d)$.
	Then for every $n \ge d$, there is a \emph{simple}
	$d$-polychromatic $Q_\ell$-coloring of $Q_n$ with $r$ colors.
\end{theorem}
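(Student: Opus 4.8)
The plan is to deduce the existence of a simple coloring on a fixed $Q_n$ from the existence of an arbitrary (not necessarily simple) $d$-polychromatic coloring on a much larger cube $Q_N$, using Ramsey's theorem to symmetrize the latter. The first observation is that it suffices to locate $n$ coordinates of $Q_N$ on which the given coloring already looks simple. If $c$ is a $d$-polychromatic $r$-coloring of the $\ell$-faces of $Q_N$ and $C = \{c_1 < \cdots < c_n\} \subseteq [N]$ is a set of coordinates, then restricting attention to faces supported on $C$ (with all remaining $N - n$ coordinates fixed to $0$) yields an $r$-coloring of $Q_n$. This restricted coloring automatically inherits $d$-polychromaticity, since every embedded $Q_d$ of the subcube on $C$ is in particular an embedded $Q_d$ of $Q_N$ and hence already sees all $r$ colors. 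So the entire problem reduces to choosing $C$ so that the restriction is simple.

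To find such a $C$, I would encode each $\ell$-face in a way that exposes exactly its profile. Adopting the convention that unspecified coordinates are $0$, a face is determined by the positions of its $\ell$ stars together with the positions of its $1$'s, the $0$'s being mere filler. Reading the stars and $1$'s in increasing coordinate order produces a word in $\{1, \ast\}$, and the number of $1$'s in each of the $\ell + 1$ regions --- that is, the profile --- is read off directly from this word, independently of how many $0$'s are interspersed. Thus the coloring is simple on $C$ precisely when, for faces supported on $C$, the color depends only on this star/one pattern and not on the actual coordinates chosen.

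The Ramsey step then proceeds as follows. For each size $s$ with $\ell \le s \le n$, color every $s$-element subset $X \subseteq [N]$ by the function that sends each labeling of the elements of $X$ by $\ell$ stars and $s - \ell$ ones to the $c$-color of the corresponding face. There are only finitely many such labelings and finitely many colors, so this is a coloring of the $s$-subsets of $[N]$ with a bounded number of colors. Applying Ramsey's theorem for each $s$ and intersecting the resulting homogeneous sets (there are only finitely many sizes $s$ to treat), I obtain, provided $N$ is large enough, a set $C$ of $n$ coordinates homogeneous for all these colorings simultaneously. On $C$ the $c$-color of a face depends only on its star/one pattern, hence only on its profile, so the restricted coloring is simple; combined with the inherited $d$-polychromaticity this produces the desired simple coloring of $Q_n$, and $n \ge d$ was arbitrary.

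The main obstacle, and the crux of the argument, is designing the Ramsey coloring so that homogeneity delivers \emph{full} simplicity. Two invariances must both be forced: invariance under permuting coordinates within a single region (same profile, rearranged $1$'s), and invariance under changing the gap sizes (same profile, different numbers of interspersed $0$'s). The device of recording only the ordered star/one pattern --- treating $0$'s as filler and letting Ramsey ignore which specific coordinates realize a given pattern --- is exactly what collapses both invariances at once. The remaining bookkeeping (handling all subset sizes $s$ together, and checking that the restricted notion of region agrees with the ambient one) is routine, and the polychromaticity transfer is immediate from the subcube observation above.
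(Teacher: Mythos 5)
Your proposal is correct and is essentially the argument the paper has in mind: the paper itself gives no proof, describing the theorem only as ``a nice application of the Ramsey theorem'' and citing \cite[Claim 10]{ref:alon}, whose proof is exactly your scheme of restricting a $d$-polychromatic coloring of a huge $Q_N$ to a coordinate set made homogeneous for the star/one-pattern colorings of $s$-subsets (polychromaticity passing to the subcube, homogeneity forcing simplicity). One small correction: you cannot literally \emph{intersect} the homogeneous sets for the different sizes $s$, since independent Ramsey applications need not have a large common intersection; instead apply Ramsey's theorem successively, extracting each homogeneous set inside the previous one, which your hedge ``provided $N$ is large enough'' already accommodates.
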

Thus for the purposes of coloring, we can consider an embedded $Q_k$ in $Q_n$
as a sequence of nonnegative integers $(a_0, a_1, \dots, a_k)$
such that $a_i$ denotes the number of $1$'s
between the $i$th and $(i+1)$st star.
For example, $01{\ast}0{\ast}011$ can be identified with $(1,0,2)$.
In light of this a $Q_\ell$-coloring with colors from a set $S$
can be thought of as a function
\[ \chi : \ZN^{\ell+1} \surjto S. \]

We can now motivate the so-called basic colorings as follows.
\begin{definition}
	For $n \ge d \ge \ell \ge 1$,
	choose positive integers $m_0, m_1, \dots, m_\ell$ with sum $d+1$
	and consider the coloring
	\[ \chi : \ZN^{\ell+1} \surjto \bigoplus_{i=0}^\ell \Zc{m_i} \]
	by projection.
	This induces a $Q_\ell$-coloring of every $Q_n$
	with $m_0m_1 \dots m_\ell$ colors.

	We call any coloring of this form a \emph{basic} coloring.
\end{definition}
\begin{example}
	Let $d = 14$, $\ell = 2$, $m_0 = m_1 = m_2 = 5$.
	We claim this gives a basic $14$-polychromatic $Q_2$-coloring
	\[ \chi : \ZN^3 \surjto \Zc5 \oplus \Zc5 \oplus \Zc5 \]
	with $5^3=125$ colors.

	Consider an embedded $Q_{14}$ in some $Q_n$,
	which can be thought of as a sequence of $14$ stars.
	Select the $5$th and $9$th star as follows,
	and denote the remaining bits by $\eps_1, \dots, \eps_{12}$,
	as shown below.
	\[
		\begin{array}{cccc c cccc c cccc}
			\ast&\ast&\ast&\ast& \boxed{\ast}&
			\ast&\ast&\ast&\ast& \boxed{\ast}&
			\ast&\ast&\ast&\ast \\
			\eps_1 & \eps_2 & \eps_3 & \eps_4 &&
			\eps_5 & \eps_6 & \eps_7 & \eps_8 &&
			\eps_9 & \eps_{10} & \eps_{11} & \eps_{12}
		\end{array}
	\]
	This gives $2^{12} = 4096$ choices of $Q_2$ faces
	in our embedded $Q_{14}$.
	We claim that all colors are present
	among just these faces.

	Let $x$, $y$, $z$ denote the number of $1$'s 
	from the ambient $Q_n$ present in
	the three regions cut out by the boxed stars.
	Then, we wish to show that
	\[
		\chi\left(
		x + \eps_{1} + \dots + \eps_{4}, \;
		y + \eps_{5} + \dots + \eps_{8}, \;
		z + \eps_{9} + \dots + \eps_{12}
		\right)
	\]
	achieves all colors, which is obvious since
	$\eps_i + \eps_{i+1} + \eps_{i+2} + \eps_{i+3}$
	takes all possible values modulo $5$.
\end{example}
More generally, as shown in \cite[Theorem 1]{ref:basic},
every basic coloring is indeed seen to be $d$-polychromatic.
The lower bound $\pb^\ell(d)$ now follows by taking
the $m_i$ such that $|m_i-m_j| \le 1$ for all $1 \le i < j \le \ell$.

\begin{definition}
	More generally, a \emph{linear coloring} is one
	where the colors are selected from some (finite) abelian group $Z$,
	and which is induced by an additive map
	\[ \chi : \ZN^{\ell+1} \surjto Z. \]
\end{definition}

\section{A family of linear colorings}
\label{sec:family}

We now exhibit a family of linear
$d$-polychromatic $Q_\ell$-colorings.

\begin{theorem}
	\label{thm:crux}
	Let $n > t$ be positive integers.
	Set either
	\begin{itemize}
		\ii $m = t^2+1$ and $d = 2t+n-1$, where $t \ge 2$, or
		\ii $m = t^2+t+1$ and $d = 2t+n$, where $t \ge 1$.
	\end{itemize}
	Then the coloring
	\[
		\chi : \ZN^3 \surjto \Zc m \oplus \Zc n
		\quad\text{ given by }\quad
		(p,q,r) \mapsto (p-tq, p+q+r)
	\]
	is a linear $d$-polychromatic $Q_2$-coloring with $mn$ colors.
\end{theorem}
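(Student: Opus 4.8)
The plan is to use the simple-coloring theorem already quoted to pass to integer parameters, and then reduce the whole statement to a covering problem in $\Zc m \oplus \Zc n$. First I would fix an arbitrary embedded $Q_d$, recorded as a tuple $(b_0,\dots,b_d)\in\ZN^{d+1}$ giving the number of $1$'s in its $d+1$ regions. A $Q_2$-face of it is obtained by letting two of the $d$ stars survive, say in positions $i<j$, and fixing the other $d-2$ stars to bits $\eps_k\in\{0,1\}$. Writing $L_i=\sum_{k<i}b_k$ and $B=\sum_k b_k$, and letting $e_0,e_1,e_2$ count the bits set to $1$ in the three regions (so $0\le e_0\le i-1$, $0\le e_1\le j-i-1$, $0\le e_2\le d-j$), such a face has parameters $a_0=L_i+e_0$, $a_1=(L_j-L_i)+e_1$, $a_2=(B-L_j)+e_2$, hence color
\[ \chi(a_0,a_1,a_2)=\bigl((1+t)L_i-tL_j+e_0-t e_1,\ B+e_0+e_1+e_2\bigr)\in\Zc m\oplus\Zc n. \]
(That $\chi$ is onto, so there are $mn$ colors, is clear: $p$ controls the first coordinate and then $r$ the second.) The theorem thus reduces to showing that, for every choice of the $b_k$, these colors exhaust $\Zc m\oplus\Zc n$.

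The arithmetic heart is a covering lemma: for $m=t^2+1$ I would prove $\{\,e_0-te_1\bmod m:e_0,e_1\ge 0,\ e_0+e_1\le 2t-2\,\}=\Zc m$, and for $m=t^2+t+1$ the same with the bound $2t-1$. On each line $e_0+e_1=c$ the value $e_0-te_1$ runs through the arithmetic progression $c,\,c-(t+1),\,c-2(t+1),\dots$, and I would check that as $c$ ranges over $0,\dots,s$ these progressions together hit every residue class. This is exactly where the values $t^2+1$ and $t^2+t+1$ enter: the identities $t^2\equiv-1$ and $t^3\equiv 1$ modulo the respective moduli are what make these particular bounds on $s$ the correct ones.

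With the lemma in hand the all-zero cube ($b\equiv 0$) is immediate and explains the numerology. Given a target $(\alpha,\beta)$, pick $(e_0,e_1)$ from the lemma realizing $\alpha$ in the first coordinate, place the surviving stars at $i=e_0+1$ and $j=e_0+e_1+2$, and note the remaining room $d-j=d-2-(e_0+e_1)\ge d-2-s=n-1$ is exactly enough to choose $e_2$ with $e_0+e_1+e_2\equiv\beta\pmod n$. The key efficiency — a single shared budget $e_0+e_1+e_2\le d-2$, instead of reserving a separate block of size $n-1$ for the last coordinate — is precisely what beats $\pb^2(d)$, and it is why the construction is tight to within one star.

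The main obstacle is the general cube, where the term $(1+t)L_i-tL_j$ shifts the first coordinate by an amount that itself depends on which stars are chosen, so the clean decoupling above breaks down. To handle it I would still force both surviving stars into the first $d-n+1$ positions, so that $d-j\ge n-1$ and the second coordinate can always be corrected by $e_2$, and then prove that as the first star $i$ (and, if needed, $j$) and the bits $\eps_k$ vary, the values $(1+t)L_i-tL_j+e_0-te_1$ still cover all of $\Zc m$. Here I would exploit that the partial sums $L_i$ are monotonic in $i$, so the offsets move in one direction in controlled steps, while each admissible box $[0,i-1]\times[0,j-i-1]$ already covers a near-complete interval of $\Zc m$; combining the covering lemma with this monotonicity should force the union of the shifted boxes over the $\binom{d-n+1}{2}$ available shapes to be all of $\Zc m$. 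Making this union argument robust against arbitrarily large $b_k$, which can open gaps in any single shape, is the delicate point, and I expect it to be the crux of the proof.
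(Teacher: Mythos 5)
Your setup (passing to simple colorings, the formula $(1+t)L_i - tL_j + e_0 - te_1$ for the first coordinate, and the triangular covering lemma for the all-zero background) is correct and matches the paper's framework, but the proof has a genuine gap exactly where you say you expect one, and the strategy you sketch for closing it does not work. You propose to keep both surviving stars within the first $d-n+1$ positions (so that $e_2$ can always fix the second coordinate) and then argue, using ``monotonicity of the partial sums $L_i$,'' that the shifted boxes cover all of $\Zc m$ in the first coordinate alone. Monotonicity buys nothing here: the increments $L_{i+1}-L_i = b_i$ are arbitrary nonnegative integers, so modulo $m$ the adversary can make the sequence $L_1, \dots, L_{d-n+1}$ completely arbitrary, and the offsets $(1+t)L_i - tL_j$ do not ``move in one direction in controlled steps'' in $\Zc m$. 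Worse, the paper's own analysis shows why demanding full first-coordinate coverage from these families is dangerous: its two largest box families (stars in positions $t,2t$ and $t+1,2t$, each with the full $n-1$ trailing stars) have boxes of size $t\cdot t$ and $(t+1)(t-1)$, each missing at least one residue class mod $m = t^2+1$, and when the background satisfies $Y - X \in \{1,\dots,t\}$ they can jointly miss a common class $k = X+t$. Your union includes additional, strictly smaller boxes with independent adversarial shifts, and you give no argument that these repair the gap for every background.

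The idea your proposal is missing is the paper's use of the \emph{second} coordinate to rescue the missing first-coordinate class. When the two main families leave $n$ ``critical colors'' $(k,\bullet)$ uncovered, the paper moves the second star to position $2t+1$ — i.e.\ deliberately leaves only $n-2$ trailing stars, violating your constraint $j \le d-n+1$. In such a family a choice $(u,v)$ realizing the critical first coordinate yields all second coordinates except exactly one, namely $u+v+S-1 \pmod n$. Using \emph{two} such families (patterns $\ast^{t-1}\,\boxed\ast\,\ast^{t}\,\boxed\ast\,\ast^{n-2}$ and $\ast^{t}\,\boxed\ast\,\ast^{t-1}\,\boxed\ast\,\ast^{n-2}$), the proof reduces to a concrete diagonal condition: find $(u_1,v_1)$, $(u_2,v_2)$ with $u_1 - tv_1 \equiv h$, $u_2 - tv_2 \equiv h + \delta \pmod m$ and $u_1+v_1 \not\equiv u_2+v_2 \pmod n$, verified by a short case analysis using $n > t$ and $1 \le \delta \le t$. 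This interplay between the $\Zc m$ and $\Zc n$ coordinates — sacrificing one value of the second coordinate to gain reach in the first, then patching with a second family whose missing value is provably different — is the crux of the theorem, and without it (or a genuinely new substitute for it) your outline does not yield a proof.
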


\begin{proof}
	We begin by addressing the first case $m = t^2+1$, $d = 2t+n-1$.
	Let $Z = \ZZ / m \oplus \ZZ / n$.

	Fix an embedding $Q_d$,
	which as usual we think of as a sequence of $d$ stars
	embedded in an ambient string of $1$'s and $0$'s.
	We can represent this with the diagram
	\[ x_0 \quad 
		\underbrace{\ast \quad x_1 \quad \ast \quad \cdots \quad %
		\ast \quad x_{d-1} \quad \ast}_{d\ \text{stars}} \quad x_d \]
	where $x_i$ denotes the number of $1$'s in the region
	delimited by those two stars.

	First, consider the family of squares cut out by the star pattern
	\[ \ast^{t-1} \quad \boxed\ast \quad \ast^{t-1}
		\quad \boxed\ast \quad \ast^{n-1} \]
	where we consider the squares formed when all the bits other
	than the $t$th and $2t$th bit are assigned a particular value.
	For example, the square
	\[
		\underbrace{0\cdots0}_{\text{$t-1$ $0$'s}}
		\quad \boxed\ast \quad
		\underbrace{0\cdots0}_{\text{$t-1$ $0$'s}}
		\quad \boxed\ast \quad
		\underbrace{0\cdots0}_{\text{$n-1$ $0$'s}}
	\]
	is assigned color $(X, S) \in Z$
	where $X = (x_0+\dots+x_{t-1})-t(x_{t}+\dots+x_{2t-1}) \pmod m$
	and $S = x_0 + \dots + x_d \pmod n$.

	Now suppose we vary the choice of assigned bits.
	First consider the last $n-1$ stars.
	Since $\{0, 1, \dots, n-1\}$ covers all residues modulo $n$,
	we see that the second coordinate is arbitrary,
	even regardless of the choices of the first $2(t-1)$ stars.
	Moreover, the first coordinate doesn't depend on the choice of
	these last $n-1$ stars.

	So we focus on the first coordinate.
	Let $0 \le u \le t-1$ and $0 \le v \le t-1$
	be the number of $1$'s we select in
	the first and second regions, respectively.
	(Thus the first coordinate receives color $X+u-tv$.)
	The values of $u-tv$ (modulo $m$) are given in the table
	\[
		\begin{array}{c|cccc}
			u-tv & u=0 & u=1 & \cdots & u=t-1 \\ \hline
			v=0 & 0 & 1 & \cdots & t-1 \\
			v=1 & t^2-t+1 & t^2-t+2 & \cdots & t^2 \\
			v=2 & t^2-2t+1 & t^2-2t+2 & \cdots & t^2-t \\
			\vdots & \vdots & \vdots & \ddots & \vdots \\
			v=t-1 & t+1 & t+2 & \cdots & 2t.
		\end{array}
	\]
	Thus, we see that we achieve exactly the colors
	with first coordinate in the set
	$X + \{ 0, 1, \dots, t-1, t+1, t+2, \dots, t^2 \}$
	so the colors not present are exactly those
	whose first coordinate is 
	\[ X + t \pmod m. \]

	Next, consider the family
	\[ \ast^{t} \quad \boxed\ast \quad \ast^{t-2}
		\quad \boxed\ast \quad \ast^{n-1} \]
	and this time define
	$Y = (x_0 + \dots + x_{t}) - t(x_{t+1} + \dots + x_{2t-1}) \pmod n$,
	which is the first coordinate of the analogous all-zero color.
	Again, consider varying the choice of assigned bits,
	this time with $u \in \{0, 1, \dots, t\}$ and $v \in \{0, \dots, t-2\}$.
	The values of $u-tv$ are given in the table
	\[
		\begin{array}{c|ccccc}
			u-tv & u=0 & u=1 & \cdots & u=t-1 & u=t \\ \hline
			v=0 & 0 & 1 & \cdots & t-1 & t \\
			v=1 & t^2-t+1 & t^2-t+2 & \cdots & t^2 & t^2+1 \\
			v=2 & t^2-2t+1 & t^2-2t+2 & \cdots & t^2-t & t^2-t+1 \\
			\vdots & \vdots & \vdots & \ddots & \vdots & \vdots \\
			v=t-2 & 2t+1 & 2t+2 & \cdots & 3t & 3t+1.
		\end{array}
	\]
	So by the same argument as in the previous case,
	the colors not present are exactly those whose
	first coordinate is in the set 
	\[ Y + \{t+1, t+2, \dots, 2t\} \pmod m. \]

	If $Y-X \notin \{1, 2, \dots, t\}$ then we are now done.
	Let $\delta = Y - X$ and henceforth assume $Y-X \in \{1, 2, \dots, t\}$.
	We denote by $k = X+t = Y+t+\delta$,
	and call any color of the form $(k, \bullet)$ a ``critical color.''
	We wish to show all $n$ critical colors are present on some other face.

	We consider the two families
	\[ \ast^{t-1} \quad \boxed\ast \quad \ast^{t}
		\quad \boxed\ast \quad \ast^{n-2} \]
	\[ \ast^{t} \quad \boxed\ast \quad \ast^{t-1}
		\quad \boxed\ast \quad \ast^{n-2} \]
	which we will call the ``first'' family and the ``second'' family.
	Let $C = -tx_{2t} \pmod m$.
	As before, the all-zero squares in these families
	receive the colors $(X+C, S) \in Z$ and $(Y+C, S) \in Z$, respectively.

	Define $u$ and $v$ as before and now let $0 \le w \le n-2$
	denote the number of $1$'s in the rightmost region.
	Again, we can exhibit two tables for $u$ and $v$ defined as before:
	for the first family we obtain a table
	\[
		\begin{array}{c|cccc}
			u-tv & u=0 & u=1 & \cdots & u=t-1 \\ \hline
			v=0 & 0 & 1 & \cdots & t-1 \\
			v=1 & t^2-t+1 & t^2-t+2 & \cdots & t^2 \\
			v=2 & t^2-2t+1 & t^2-2t+2 & \cdots & t^2-t \\
			\vdots & \vdots & \vdots & \ddots & \vdots \\
			v=t-1 & t+1 & t+2 & \cdots & 2t \\
			v=t & 1 & 2 & \cdots & t.
		\end{array}
	\]
	and for the second family we obtain a table
	\[
		\begin{array}{c|ccccc}
			u-tv & u=0 & u=1 & \cdots & u=t-1 & u=t \\ \hline
			v=0 & 0 & 1 & \cdots & t-1 & t \\
			v=1 & t^2-t+1 & t^2-t+2 & \cdots & t^2 & t^2+1 \\
			v=2 & t^2-2t+1 & t^2-2t+2 & \cdots & t^2-t & t^2-t+1 \\
			\vdots & \vdots & \vdots & \ddots & \vdots & \vdots \\
			v=t-2 & 2t+1 & 2t+2 & \cdots & 3t & 3t+1 \\
			v=t-1 & t+1 & t+2 & \cdots & 2t & 2t+1.
		\end{array}
	\]
	Note that every possible first coordinate is represented
	in both tables.

	Set $h = k-(X+C)$.
	Then the entries equal to $h$ in the first table
	correspond to choices $(u_1,v_1)$ which yield squares
	of critical color (regardless of the choice of $w$).
	In fact, as we vary $w$ the critical colors
	which are obtained are $(k, u_1+v_1+S+w)$,
	which is exactly the sequence of colors
	\[ (k, u_1+v_1+S), \; (k,u_1+v_1+S+1), \; \dots,
		\; (k, u_1+v_1+S+n-2). \]
	Thus the only critical color not present is $(k, u_1+v_1+S-1)$.

	Similarly, the entries equal to $h+\delta$ in the second table
	correspond to choices $(u_2,v_2)$ which yield squares in
	the second family with color $(k,\bullet)$
	(again regardless of the choice of $w$).
	For such a choice of $(u_2, v_2)$,
	by the same logic, the only critical color not present
	is $(k, u_2+v_2+S-1)$.

	So the problem reduces to the following.
	For arbitrary $h$ and $1 \le \delta \le t$,
	we need to show there exist
	$0 \le u_1 \le t-1$, $0 \le v_1 \le t$,
	$0 \le u_2 \le t$, and $0 \le v_2 \le t-1$
	so that
	\begin{align}
		u_1-tv_1 &\equiv h \pmod m \label{eq:first} \\
		u_2-tv_2 &\equiv h+\delta \pmod m \label{eq:second} \\
		u_1+v_1 &\not\equiv u_2+v_2 \pmod n. \label{eq:compat}
	\end{align}

	Intuitively, one can see this geometrically from the earlier tables.
	The quantities $u_i + v_i \pmod n$ correspond to ``northeast diagonals''
	in the table, which are ``spaced apart'' (since $n>t$)
	in such a way that a perturbation by $\delta < t$
	must move any $h$ into a different diagonal.

	We formalize this intuition in the following calculations.
	
	\begin{itemize}
		\ii In the case $h=t$, take
		$(u_1, v_1) = (t-1, t)$ and $(u_2, v_2) = (\delta - 1, t-1)$.
		Then $(u_1+v_1)-(u_2+v_2) = t+1-\delta$,
		which is not divisible by $n$ since $n > t$ and $1 \le \delta \le t$.

		\ii In the case $h=t-\delta$, take $(u_1, v_1) = (t-\delta, 0)$
		and $(u_2, v_2) = (t, 0)$.
		Then $(u_1+v_1)-(u_2+v_2) = -\delta$,
		again not divisible by $n$.

		\ii Now assume neither $h$ nor $h+\delta$ is equal to $t$.
		Then we can pick $(u_1, v_1)$ and $(u_2, v_2)$ satisfying
		\eqref{eq:first} and \eqref{eq:second},
		and actually $u_1, v_1, u_2, v_2 \in \{0, 1, \dots, t-1\}$.
		Let $A = u_2-u_1$ and $B = v_1-v_2$, so $A,B \in [-(t-1), t-1]$
		Now, subtracting \eqref{eq:first} from \eqref{eq:second} gives
		\[ A + tB \equiv \delta \pmod m. \]
		We have on one hand that $A+tB \le t-1 + t(t-1) < m < m+\delta$.
		On the other hand if $B \neq -(t-1)$ we also have
		$A+tB \ge -(t-1) + t(-t+2) \ge -t^2+t+1 > -m+\delta$.
		So there are only two possibilities:
		either
		\[ (A,B) = (\delta,0) \quad\text{or}\quad
		(A,B) = (-1-t+\delta, -(t-1)). \]
		In both cases, $A \neq B$ and $|A-B| \le \delta \le t < n$,
		hence \[ A \not\equiv B \pmod n \]
		which yields \eqref{eq:compat}.
	\end{itemize}
	Having completed all cases, this completes the proof of
	the situation $m = t^2+1$, $d = 2t+n-1$.

	The case where $m = t^2+t+1$ and $d = 2t+n$ is virtually identical,
	and so we will merely give a brief overview.
	The idea this time is to consider first the two families
	\[ \ast^{t-1} \quad \boxed\ast \quad \ast^{t}
		\quad \boxed\ast \quad \ast^{n-1} \]
	\[ \ast^{t} \quad \boxed\ast \quad \ast^{t-1}
		\quad \boxed\ast \quad \ast^{n-1} \]
	in order to once again reduce to a set of $n$ missing colors.
	Then one considers the family
	\[ \ast^{t-1} \quad \boxed\ast \quad \ast^{t+1}
		\quad \boxed\ast \quad \ast^{n-2} \]
	\[ \ast^{t} \quad \boxed\ast \quad \ast^{t}
		\quad \boxed\ast \quad \ast^{n-2} \]
	in the same manner as before.
\end{proof}

\begin{proof}
	[Proof of Theorem~\ref{thm:main}]
	In Theorem~\ref{thm:crux},
	take the following choices of parameters:
	\begin{itemize}
		\ii If $d=3k$, take $t=k$, $m=t^2+1$, $n=k+1$.
		\ii If $d=3k+1$, take $t=k$, $m=t^2+t+1$, $n=k+1$.
		\ii If $d=3k+2$, take $t=k$, $m=t^2+t+1$, $n=k+2$. \qedhere
	\end{itemize}
\end{proof}

\begin{proof}
	[Proof of Corollary~\ref{cor:limsup}]
	The result is immediate by Theorem~\ref{thm:main} for $\ell = 2$.

	For any general $\ell > 2$, let $d+1 = m_0 + m_1 + \dots + m_\ell$
	where $m_i \in \ZZ$ and $|m_i-m_j| \le 1$ for any $i$ and $j$.
	Let
	\[ \chi_0 : \ZZ_{\ge 0}^{\ell-2} \surjto \bigoplus_{j=0}^{\ell-3} \ZZ/m_j \]
	denote the basic coloring on $m_0 + \dots + m_{\ell-3} - 1$ stars,
	and let
	\[ \chi_1 : \ZZ_{\ge 0}^3 \surjto Z \]
	denote the coloring in Theorem~\ref{thm:main} on
	$m_{\ell-2} + m_{\ell-1} + m_\ell - 1$ stars.

	Then we can consider a coloring 
	\[
		\chi : \ZZ_{\ge 0}^{\ell+1}
		\surjto \left( \bigoplus_{j=0}^{\ell-3} \ZZ/m_j \right)
		\oplus Z
	\]
	defined by $\chi_0 \oplus \chi_1$,
	that applies $\chi_0$ to the first $\ell-2$
	components and $\chi_1$ on the last three.
	By construction $\chi$ also gives a $d$-polychromatic coloring,
	and the corollary follows.
\end{proof}

\begin{example}
	To illustrate Corollary~\ref{cor:limsup},
	suppose $d = 12$ and $\ell = 4$.
	Pick $m_0 = m_1 = m_2 = 3$ and $m_3 = m_4 = 2$;
	then the coloring $\chi_0$ has $\pb^1(5) = 9$ colors,
	while $\chi_1$ has $15$ colors
	(as in Theorem~\ref{thm:main}),
	and so the coloring $\chi_0 \oplus \chi_1 = 9 \cdot 15 = 135$ colors.
	On the other hand, $\pb^4(12) = 3^3 \cdot 2^2 = 108$ colors,
	according to the Theorem from \cite{ref:basic}.
\end{example}

\section{Upper bounds}
\label{sec:upper}
We do not have at present any upper bound for $p^2(Q_d)$
other than the simple $\binom{d+1}{3}$ bound.
In this section we briefly mention an upper bound for
the number of colors in a \emph{linear} $d$-polychromatic coloring.

Specifically, we use the geometry of numbers to prove the following.
\begin{theorem}
	Let $\chi : \ZZ_{\ge 0}^3 \surjto Z$ be a linear $d$-polychromatic coloring.
	For $d$ sufficiently large, we have
	\[ \left\lvert Z \right\rvert < \frac{26}{27} \binom{d+1}{3}. \]
\end{theorem}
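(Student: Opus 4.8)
The plan is to recast the statement as a lattice-covering problem and then bring in the geometry of numbers.

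First I would make the coloring algebraic. A linear coloring $\chi : \ZZ_{\ge 0}^3 \surjto Z$ is the restriction of a surjective homomorphism $\ZZ^3 \surjto Z$, namely $\chi(a_0,a_1,a_2) = a_0 g_0 + a_1 g_1 + a_2 g_2$ with $g_i = \chi(e_i)$. Setting $\Lambda \defeq \ker\chi \subseteq \ZZ^3$ gives $\ZZ^3/\Lambda \cong Z$, so $|Z| = [\ZZ^3 : \Lambda]$ and the problem is to bound this index. I would then read off the colors forced by a single maximally degenerate $Q_d$, the one all of whose $d+1$ regions are empty. Choosing which two of its $d$ stars survive and how many of the remaining $d-2$ stars are set to $1$ lets $(a_0,a_1,a_2)$ range over exactly $T_D \defeq \{a \in \ZZ_{\ge 0}^3 : a_0+a_1+a_2 \le D\}$, where $D \defeq d-2$; here $|T_D| = \binom{D+3}{3} = \binom{d+1}{3}$. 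Since $\chi$ is $d$-polychromatic this one face already carries every color, so $\chi(T_D) = Z$, i.e.\ $T_D$ meets every coset of $\Lambda$, i.e.\ $\Lambda + T_D = \ZZ^3$. The naive consequence $|Z| = |\chi(T_D)| \le |T_D| = \binom{d+1}{3}$ recovers the trivial bound and is sharp exactly when $T_D$ tiles $\ZZ^3$ by $\Lambda$.

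So the content is that a simplex cannot tile space by translations, made quantitative: under the covering hypothesis I must produce at least $\tfrac1{27}\binom{d+1}{3}$ collisions, since then $|Z| = |\chi(T_D)| = |T_D| - (\text{collisions}) \le \tfrac{26}{27}\binom{d+1}{3}$. For $d$ large the discrete covering $\Lambda + T_D = \ZZ^3$ forces a continuous lattice covering $\Lambda + \Delta_{D+O(1)} = \RR^3$ of the simplex $\Delta_D \defeq \{x \in \RR_{\ge 0}^3 : x_0+x_1+x_2 \le D\}$ (enlarging $D$ by $O(1)$ absorbs the boundary layer and perturbs the volume only by $1+o(1)$). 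The density of this covering satisfies $\operatorname{vol}(\Delta_D)/\operatorname{covol}(\Lambda) \ge \theta$, where $\theta$ is the thinnest lattice-covering density of a tetrahedron; as covering density is invariant under invertible linear maps, $\theta$ does not depend on the particular simplex. With $\operatorname{covol}(\Lambda) = |Z|$ and $\operatorname{vol}(\Delta_D) = D^3/6 = (1+o(1))\binom{d+1}{3}$ this gives
\[ |Z| \;\le\; \frac1\theta\,(1+o(1))\binom{d+1}{3}, \]
so the theorem follows once $\theta \ge 27/26$, the slack covering the $o(1)$ for $d$ large.

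The crux is precisely this density bound $\theta \ge 27/26$, equivalently the local assertion that any lattice covering by a simplex wastes a definite fraction of its volume to overlap. I would attack it with Minkowski's first and second theorems, applied to the symmetric difference body $\tfrac12(\Delta_D - \Delta_D)$, to tie the successive minima of $\Lambda$ to $\operatorname{covol}(\Lambda)$, and then with an explicit estimate showing that the translates meeting a common point of $\RR^3$ cannot cover its neighborhood without an overlap of the required size. I expect this to be the main obstacle: the reduction above is routine, but pinning down the \emph{exact} constant $27/26$ — as opposed to the lossy constants that generic transference theorems supply — and verifying the matching forced-overlap estimate for the tetrahedron is the delicate step. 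A secondary, purely technical matter is checking that the $O(d^2)$ discrepancy between the discrete condition $\Lambda + T_D = \ZZ^3$ and the continuous covering is indeed absorbed into the stated $o(1)$ once $d$ is sufficiently large.
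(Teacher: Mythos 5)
Your opening reduction is exactly the paper's: extend $\chi$ to a homomorphism $\ZZ^3 \surjto Z$, set $\Lambda = \ker\chi$ so that $|Z| = [\ZZ^3 : \Lambda]$, and observe that the degenerate embedded $Q_d$ forces every color to appear on the discrete simplex $\{(x,y,z) \in \ZZ_{\ge 0}^3 : x+y+z \le d-2\}$, i.e.\ $\Lambda + T_{d-2} = \ZZ^3$. But from there you push the entire quantitative content of the theorem into the unproven assertion $\theta \ge 27/26$ for the thinnest lattice covering of $\RR^3$ by a tetrahedron, and the plan you sketch for it does not hold up: Minkowski's first and second theorems applied to the difference body $\tfrac12(\Delta - \Delta)$ bound successive minima against the covolume, which is \emph{packing}-type information; they cannot by themselves yield a covering-density lower bound strictly above $1$, since any such bound must encode the geometric fact that the tetrahedron fails to tile space, and no successive-minima bookkeeping supplies that. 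You flag this yourself as ``the main obstacle,'' so the proposal has a genuine gap precisely at its crux. There is also a strictness problem even granting $\theta \ge 27/26$: passing from the discrete condition to a continuous covering requires enlarging the simplex (to $\Delta_{D+3}$, say, of volume $(D+3)^3/6 > \binom{d+1}{3}$ where $D = d-2$), so your conclusion reads $|Z| \le \frac{26}{27}(1+o(1))\binom{d+1}{3}$ with the error on the wrong side; you would need $\theta > 27/26$ with a definite gap to recover the stated strict inequality.

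The paper sidesteps the covering question entirely by working in the packing direction, where an exact optimal constant is citable. Realizing $\ZZ^3$ as the tetrahedral lattice $\Lambda_0$ in $\RR^3$, the spheres of diameter $c$ (the shortest nonzero vector length of $\Lambda$) centered at points of $\Lambda$ form a sphere packing; since $\Lambda_0$ achieves the optimal packing density in $\RR^3$ and $\det(\Lambda) = N\det(\Lambda_0)$, this forces $c \le \sqrt[3]{N}$. A short vector $v \in \Lambda$ makes $\chi(p) = \chi(p+v)$ for all $p$, so every color already occurs within a frustum of $s = \bigl\lceil \sqrt{3/2}\,c \bigr\rceil$ layers at the boundary of the tetrahedron, yielding the explicit bound $N \le \binom{d+1}{3} - \binom{d+1-s}{3}$; combining this with $c \le \sqrt[3]{N}$ and solving the resulting cubic inequality gives $c/d \le 0.5434$, hence $s < 0.666d$ and $N < \bigl(1 - (1/3)^3\bigr)\binom{d+1}{3} = \frac{26}{27}\binom{d+1}{3}$ for large $d$. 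To salvage your route you would have to either prove the forced-overlap estimate for the simplex outright or invoke a known value of the simplex lattice-covering density (which would in fact give a stronger constant), but as submitted the argument is incomplete at its central step.
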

\begin{proof}
	Let $N = |Z|$.
	Extend $\chi$ to a map $\ZZ^3 \surjto Z$ of abelian groups.
	Then consider $\ZZ^3$ as a tetrahedral lattice $\Lambda_0$ in $\RR^3$.
	In this case, the kernel of $\chi$ is a lattice $\Lambda$
	of index $N$ in $\ZZ^3$.

	Let $n = d-2$.
	Now if we consider the coloring of $Q_d$ itself by $\chi$
	(or really any embedding of $Q_d$ into $Q_N$ with all ambient bits zero),
	we see that the colors present are precisely those $\chi(x,y,z)$
	where $x+y+z \le n$, $x,y,z \in \ZZ_{\ge 0}$.
	Thus we obtain a regular tetrahedron $T$ of side length $n$
	in which all colors are present.

	On the other hand suppose that $\Lambda$ contains
	a nonzero vector $v$ which fits inside a
	regular tetrahedron of side length $s > 0$.
	Therefore for any $p \in \ZZ^3$, $\chi$ assigns the same color
	to both $p$ and $p+v$.
	In particular, this implies all the colors are present
	in a frustum of $T$ with height $s$ layers;
	this gives a bound of
	\begin{equation}
		N \le \binom{d+1}{3} - \binom{d+1-s}{3}.
		\label{eq:tetrahedron}
	\end{equation}

	Now let $c$ be the length of the shortest nonzero vector in $\Lambda$.
	Then since a tetrahedron has height equal to $\sqrt{2/3}$ times 
	its side length, we may take
	\begin{equation}
		s = \left\lceil \sqrt{3/2} c \right\rceil.
		\label{eq:height}
	\end{equation}
	Next we bring in the theory of sphere packing.
	Observe that if we construct spheres of diameter $c$
	centered at each point in $\Lambda$,
	then we have obtained a packing of spheres in $\RR^3$.
	We have $\det(\Lambda) = N \det(\Lambda_0)$,
	but $\Lambda_0$ is known to be an optimal packing of $3$-spheres
	(see e.g.\ \cite{ref:spheres}), and so from this we deduce that
	\begin{equation}
		0 < c \le \sqrt[3]{N}.
		\label{eq:sphere}
	\end{equation}

	Collating \eqref{eq:tetrahedron}, \eqref{eq:height}, \eqref{eq:sphere}
	together we deduce the inequality
	\[ c \le \sqrt[3]{\binom{d+1}{3} - \binom{d-\sqrt{3/2} c}{3}}. \]
	Thus, we have
	\begin{align*}
		6c^3 &\le (d^3-d) +
		\left(\sqrt{3/2}c - d\right)
		\left(\sqrt{3/2}c - (d-1)\right)
		\left(\sqrt{3/2}c - (d-2)\right) \\
		&=\sqrt{27/8}c^3 - 9/2(d-1) c^2 + (3d^2-6d+2)\sqrt{3/2}c + (3d^2-3d).
	\end{align*}
	We can rewrite this as
	\[
		\left( 6-\sqrt{\frac{27}{8}} \right) \left( \frac cd \right)^3
		+ \frac{9}{2} \left( \frac cd \right)^2
		- 3\sqrt{\frac32} \left( \frac cd \right)
		\le O\left( \frac 1d \right).
	\]
	Solving the resulting quadratic,
	we see that for sufficiently large $d$ we have $c/d \le 0.5434$,
	and thus $s < 0.5434\sqrt{3/2}d + 1 < 0.666d$.
	Finally, using \eqref{eq:tetrahedron} we have
	\begin{align*}
		N &\le \binom{d+1}{3} - \binom{0.334d+1}{3} \\
		&< \left( 1 - \left( \frac 13 \right)^3 \right) \binom{d+1}{3} \\
		&= \frac{26}{27} \binom{d+1}{3}
	\end{align*}
	again for $d$ sufficiently large.
\end{proof}

It would be interesting if any stronger upper bounds could be proven for
polychromatic colorings, linear or otherwise.

\bibliographystyle{hplain}
\bibliography{refs-cube}

\end{document}